\newtheorem{thm}{Theorem}
\newtheorem{pro}{Proposition}
\newtheorem{cor}
{Corollary}
\renewenvironment{proof}[1][\proofname] {\par\pushQED{\qed}\normalfont\topsep6\p@\@plus6\p@\relax\trivlist\item[\hskip\labelsep\bfseries#1\@addpunct{.}]\ignorespaces}{\popQED\endtrivlist\@endpefalse}
\def\[#1\]{\begin{align*}#1\end{align*}}
\newcommand{\R}{\mathbb{R}}
\newcommand{\N}{\mathbb{N}}
\def\C{\mathbb{C}}
\newcommand{\ceq}{\coloneqq}
\renewcommand{\P}{\mathbb{P}}
\def\B{\mathscr{B}}
\newcommand{\lrtx}[1]{\ \text{#1} \ }
\newcommand{\M}
{\mathbb{M}}
\newcommand{\st}{\,\vert\,}
\newcommand{\pr}{\ensuremath{'}}
\newcommand{\pfqed}{\tag*{\qedhere}}
\newcommand{\HD}{\mathbb{H}}
\newcommand{\diam}{\mathrm{diam}}
\begin{document}
\title{Some Remarks on Hausdorff Measurability of Lipschitz Images in Metric
Spaces}
\author{Yu-Lin Chou\thanks{Yu-Lin Chou, Institute of Statistics, National Tsing Hua University, Hsinchu 30013, Taiwan,  R.O.C.; Email: \protect\url{y.l.chou@gapp.nthu.edu.tw}.}}
\date{}
\maketitle

\begin{abstract}
In this short note, we show that, in any given metric space, every
Lipschitz open-map image of every subset of a given metric space whose
boundary is Hausdorff-null is Hausdorff-measurable with respect to
the same dimension. The main results are connected with a number of
familiar concepts in other branches such as complex analysis, functional
analysis, and topology.\\

{\noindent \textbf{Keywords:}} geometric analysis; geometric measure theory; Hausdorff measures; Lipschitz  maps; metric spaces; open maps\\
{\noindent \textbf{MSC 2020:}} 28A78; 26A16; 28A75; 46A30    
\end{abstract}

\section{Introduction}

Hausdorff measures can be studied in metric spaces, which are apparently
quite more general than the usual Euclidean spaces, i.e. any $n$
copies of $\R$ equipped with the metric induced by the $l^{2}$-norm.
The investigation is done toward the general question: to what extent
one can obtain Hausdorff measurability results for the image of a
set under a map acting between metric spaces. This question would
be \textit{a priori} of intellectual interest; and its derivatives
could be potentially useful somewhere involving Hausdorff integrals,
and could be useful even from the viewpoint of applications external
to geometric measure theory and geometric analysis: fields such as
harmonic analysis (e.g. Mattila \cite{m}), probability theory (e.g.
Adler \cite{a} or Billingsley \cite{b1,b2} or Xiao \cite{x}), or
fractal geometry (e.g. Edgar \cite{e} or Falconer \cite{f}) serve
as significant examples.

Some preliminary remarks would be helpful for the purposes of communication.
Considering the nice analytic properties and the geometric meanings such as the notion of rectifiability associated with Lipschitz maps, 
in particular those maps acting between metric spaces whose “increment
ratios” are uniformly bounded, we take as our primary concern the
Lipschitz maps acting between metric spaces. Since Borel sigma-algebra
would be in many (practical) situations a natural, tractable choice
of measurability structure for metric spaces, and since Hausdorff
measures are always Borel (Theorem 27, Rogers \cite{r}), the continuity
of Lipschitz maps implies that every Lipschitz preimage of Borel sets
is Hausdorff-measurable; this fact is, in particular, invariant in
the choice of “dimension gauge” entering into the construction
of Hausdorff measures.

On the contrary, the Hausdorff measurability of Lipschitz images is
much less apparent even in a case involved only with the Euclidean
spaces; the reader is invited to consider the first assertion of Lemma 3.2 in Evans and Gariepy
\cite{eg}, which shows that the image of every Lebesgue-measurable
subset of $\R^{n}$ under any given Lipschitz map from $\R^{n}$ to
$\R^{m}$ is Hausdorff-measurable, where the reference Hausdorff measure
is the $n$-dimensional Hausdorff measure \[
A \mapsto \sup_{\delta > 0}\inf \bigg\{ \sum_{j \in \N}\frac{\pi^{n/2}}{\Gamma(\frac{n}{2} + 1)}\bigg( \frac{\diam(A_{j})}{2} \bigg)^{n} \,\bigg\vert\, \{ A_{j} \}_{j \in \N} \subset 2^{\R^{m}} \lrtx{covers} A,\ \sup_{j \in \N}\diam(A_{j}) \leq \delta \bigg\}
\]defined for all subsets $A$ of $\R^{m}$. The “covertness”
of Hausdorff measurability of Lipschitz images, as compared to the
apparentness of that of Lipschitz preimages, is arguably expected;
the present work is intended as a contribution to unravel the covertness.

Our main remarks, toward answering the question, are the following: 

\begin{thm}\label{main}
Let $\Omega_{1}, \Omega_{2}$ be metric spaces;
let $\HD_{1}^{\alpha}, \HD_{2}^{\alpha}$ be $\alpha$-dimensional Hausdorff measures over $\Omega_{1}$ and $\Omega_{2}$, respectively, for all $\alpha \in \R_{+}$.
If $f: \Omega_{1} \to \Omega_{2}$ is a Lipschitz, open map,
then $f^{[1]}(A) \in \sigma(\HD_{2}^{\alpha})$ for all $A \subset \Omega_{1}$ such that $\HD_{1}^{\alpha}(\partial A) = 0$ and for all $\alpha \in \R_{+}$.
\end{thm}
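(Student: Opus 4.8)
The plan is to split $A$ into its interior and a subset of its topological boundary, then dispatch these two pieces using, respectively, the openness and the Lipschitz property of $f$.

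Observe first that $A = \mathrm{int}(A) \cup \big(A \setminus \mathrm{int}(A)\big)$ and that $A \setminus \mathrm{int}(A) \subset \overline{A} \setminus \mathrm{int}(A) = \partial A$, where interior, closure, and boundary are taken in $\Omega_{1}$. Applying $f$ gives $f^{[1]}(A) = f^{[1]}(\mathrm{int}(A)) \cup f^{[1]}\big(A \setminus \mathrm{int}(A)\big)$. Since $\mathrm{int}(A)$ is open in $\Omega_{1}$ and $f$ is assumed open, $f^{[1]}(\mathrm{int}(A))$ is open in $\Omega_{2}$; because $\HD_{2}^{\alpha}$ is a metric outer measure, hence Borel (Theorem 27, Rogers \cite{r}), we conclude $f^{[1]}(\mathrm{int}(A)) \in \sigma(\HD_{2}^{\alpha})$.

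For the second piece, I would first record the standard fact that a Lipschitz map contracts Hausdorff measures up to a power of the Lipschitz constant: if $f$ has Lipschitz constant $L$, then $\diam f^{[1]}(S) \leq L\diam S$ for every $S \subset \Omega_{1}$, so any $\delta$-cover of a set $E \subset \Omega_{1}$ is pushed forward to an $(L\delta)$-cover of $f^{[1]}(E)$, and passing to infima and then to the supremum over $\delta$ yields $\HD_{2}^{\alpha}(f^{[1]}(E)) \leq L^{\alpha}\HD_{1}^{\alpha}(E)$ (any normalizing constant in the definition of $\HD^{\alpha}$ simply factors through). Combined with monotonicity and the hypothesis, this gives $\HD_{2}^{\alpha}\big(f^{[1]}(A \setminus \mathrm{int}(A))\big) \leq L^{\alpha}\HD_{1}^{\alpha}(A \setminus \mathrm{int}(A)) \leq L^{\alpha}\HD_{1}^{\alpha}(\partial A) = 0$.

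It then remains to note that every $\HD_{2}^{\alpha}$-null set belongs to $\sigma(\HD_{2}^{\alpha})$: for an arbitrary test set $E \subset \Omega_{2}$ and a null set $N$, monotonicity gives $\HD_{2}^{\alpha}(E \cap N) + \HD_{2}^{\alpha}(E \setminus N) \leq 0 + \HD_{2}^{\alpha}(E)$, while subadditivity gives the reverse inequality, so $N$ meets the Carathéodory criterion. Hence $f^{[1]}(A \setminus \mathrm{int}(A)) \in \sigma(\HD_{2}^{\alpha})$, and since $\sigma(\HD_{2}^{\alpha})$ is closed under finite unions, $f^{[1]}(A) \in \sigma(\HD_{2}^{\alpha})$, as wanted. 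I do not foresee a genuinely hard step here: the only mild subtleties are verifying the Lipschitz contraction estimate in the generality of arbitrary metric spaces and keeping the interior/boundary bookkeeping straight. The conceptual content is simply that openness handles $\mathrm{int}(A)$, while the negligibility of $\partial A$, transported by the Lipschitz bound, handles the remainder.
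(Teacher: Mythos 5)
Your proposal is correct and follows essentially the same route as the paper's own proof: the same decomposition $A = \mathrm{int}(A) \cup (A \setminus \mathrm{int}(A))$, the same use of openness to make $f^{[1]}(\mathrm{int}(A))$ Borel, the same Lipschitz push-forward estimate on covers to kill the boundary piece, and the same appeal to completeness of $\sigma(\HD_{2}^{\alpha})$ for null sets. The only difference is cosmetic: you spell out the Carath\'eodory verification that null sets are measurable, which the paper simply cites as the completeness of $\sigma(\M)$ for any outer measure.
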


\begin{pro}\label{mmain}
Let $\Omega_{1}, \Omega_{2}$ be metric spaces; let $\HD_{1}^{\alpha}, \HD_{2}^{\alpha}$ be $\alpha$-dimensional Hausdorff measures over $\Omega_{1}$ and $\Omega_{2}$, respectively, for all $\alpha \in \R_{+}$; let $\M$ be a Radon measure over $\Omega_{1}$. If $\Omega_{1}$ is sigma-compact, if $f: \Omega_{1} \to \Omega_{2}$ is Lipschitz, and if $\HD_{1}^{\alpha} \leq \M$ on $\B_{\Omega_{1}}$, then $f^{[1]}(A) \in \sigma(\HD_{2}^{\alpha})$ for all $A \in \B_{\Omega_{1}}$.
\end{pro}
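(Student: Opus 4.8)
The plan is to split the Borel set $A$ into a part on which $f$ behaves topologically and a part on which $f$ behaves metrically. Concretely, I would first produce an $F_{\sigma}$ subset $K \subseteq A$ that is a countable union of compact sets and satisfies $\M(A \setminus K) = 0$; then, writing $f^{[1]}(A) = f^{[1]}(K) \cup f^{[1]}(A \setminus K)$, I would treat $f^{[1]}(K)$ via continuity of $f$ (images of compact sets are compact, hence Borel) and $f^{[1]}(A \setminus K)$ via the hypothesis $\HD_{1}^{\alpha} \leq \M$ together with the Lipschitz property of $f$. Before this, one records the routine observation that $\M$ is finite on every compact set: writing $\Omega_{1} = \bigcup_{n \in \N} K_{n}$ with each $K_{n}$ compact (possible by sigma-compactness), local finiteness of $\M$ lets one cover each $K_{n}$ by finitely many open sets of finite $\M$-measure, so $\M(K_{n}) < +\infty$.

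The crux is the construction of $K$. Fix $A \in \B_{\Omega_{1}}$. For each $n$ the set $A \cap K_{n}$ is a Borel subset of $\Omega_{1}$ with $\M(A \cap K_{n}) \leq \M(K_{n}) < +\infty$, so, by inner regularity of the Radon measure $\M$ by compact sets (applicable since $\M(A \cap K_{n}) < +\infty$), there is an increasing sequence $(C_{n}^{k})_{k \in \N}$ of compact subsets of $A \cap K_{n}$ with $\M(C_{n}^{k}) \uparrow \M(A \cap K_{n})$. Put $G_{n} \ceq \bigcup_{k \in \N} C_{n}^{k}$; then $\M(G_{n}) = \M(A \cap K_{n})$ is finite, whence $\M\big((A \cap K_{n}) \setminus G_{n}\big) = 0$. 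Now let $K \ceq \bigcup_{n \in \N} G_{n}$, a countable union of compact sets (hence $F_{\sigma}$) contained in $A$. Since $A = \bigcup_{n \in \N}(A \cap K_{n})$ and each $G_{n} \subseteq K$, one has $A \setminus K \subseteq \bigcup_{n \in \N}\big((A \cap K_{n}) \setminus G_{n}\big)$, so $\M(A \setminus K) = 0$; as $A \setminus K \in \B_{\Omega_{1}}$, the hypothesis $\HD_{1}^{\alpha} \leq \M$ on $\B_{\Omega_{1}}$ yields $\HD_{1}^{\alpha}(A \setminus K) = 0$.

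To conclude: $f$ is continuous, so $f^{[1]}(K) = \bigcup_{n, k} f^{[1]}(C_{n}^{k})$ is a countable union of compact, hence closed, hence Borel subsets of $\Omega_{2}$, and since Hausdorff measures are Borel measures (Theorem 27 of Rogers \cite{r}) this gives $f^{[1]}(K) \in \sigma(\HD_{2}^{\alpha})$. Also $f$ is Lipschitz, hence maps $\HD_{1}^{\alpha}$-null sets to $\HD_{2}^{\alpha}$-null sets (a Lipschitz map turns a $\delta$-cover into a cover scaled by at most its Lipschitz constant), so $\HD_{2}^{\alpha}\big(f^{[1]}(A \setminus K)\big) = 0$; since every set of $\HD_{2}^{\alpha}$-outer-measure zero is Carath\'{e}odory-measurable, $f^{[1]}(A \setminus K) \in \sigma(\HD_{2}^{\alpha})$. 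As $\sigma(\HD_{2}^{\alpha})$ is a sigma-algebra, $f^{[1]}(A) = f^{[1]}(K) \cup f^{[1]}(A \setminus K) \in \sigma(\HD_{2}^{\alpha})$, which is the claim. The step requiring the most care is the construction of $K$: approximating $A$ directly from within by compact sets fails when $\M(A) = +\infty$, and sigma-compactness is used precisely to localise the approximation to the pieces $K_{n}$, where $\M$ is finite and inner regularity is available, before reassembling by a countable union.
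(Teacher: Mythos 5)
Your proof is correct and follows essentially the same route as the paper's: inner regularity supplies compact subsets exhausting $A$ up to an $\M$-null (hence, by the hypothesis $\HD_{1}^{\alpha} \leq \M$ on $\B_{\Omega_{1}}$, an $\HD_{1}^{\alpha}$-null) remainder, continuity makes the images of those compacts Borel, and the Lipschitz estimate together with the completeness of $\sigma(\HD_{2}^{\alpha})$ disposes of the remainder. The only difference is organizational---you localise to the compact exhaustion of $\Omega_{1}$ first and assemble a single $F_{\sigma}$ kernel $K$, whereas the paper treats the finite-$\M$-measure case first and then reduces the infinite case to it via sigma-compactness (and note that finiteness of $\M$ on compacta is already part of the paper's definition of a Radon measure, so your opening observation is not needed).
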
As will be discussed, these results contain and are connected with
some familiar, important concepts and facts in different branches
of mathematics. The possibly to-be-clarified objects present in the
above statements will be immediately dealt with in the next section;
and the last section contains the major observations. 

\section{Preliminaries}

We denote by $\R_{+}$ the set of all reals $\geq 0$. Given any sets
$\Omega_{1}, \Omega_{2}$ and any function $f: \Omega_{1} \to \Omega_{2}$,
we denote by $f^{[1]}$ the corresponding image map $2^{\Omega_{1}} \to 2^{\Omega_{2}}$
that takes every $A \subset \Omega_{1}$ to be its $f$-image $\{ f(x) \st x \in A \}$.
If $\Omega$ is a metric space, the Borel sigma-algebra of $\Omega$
(with respect to the topology induced by the given metric) is denoted
$\B_{\Omega}$. For every Lipschitz function $f$, we denote by $|f|_{Lip}$
the Lipschitz constant of $f$. 

Throughout, given any set $\Omega$, by a \textit{measure \textup{(}over}
$\Omega$) we mean what is frequently called an outer measure (defined
on $2^{\Omega}$); this terminology is more convenient for our purposes
and has its roots in the spirit of Carath{\'e}odory's original developments.
Thus “Lebesgue measure” will mean what is usually referred to
as Lebesgue outer measure. If $\Omega$ is a set, and if $\M$ is
a measure on $2^{\Omega}$, we denote by $\sigma(\M)$ the collection
of all $\M$-measurable subsets of $\Omega$; the fact that such a
collection is always a sigma-algebra explains the notation. If $\Omega$
is a topological space, for a measure $\M$ over $\Omega$ to be a
\textit{Radon measure} we require, following Krantz and Parks \cite{k},
that $\sigma(\M) \supset \B_{\Omega}$, that $\M$ be finite at every
compact subset of $\Omega$, that $\M$ be inner regular at every
open subset of $\Omega$, and that $\M$ be outer regular at every
subset of $\Omega$. 

If $\Omega$ is a metric space, and if $\alpha \in \R_{+}$, by the
$\alpha$\textit{-dimensional Hausdorff measure over} $\Omega$ we
mean\footnote{This ``additional" clarification seems necessary as Hausdorff measures can be defined in a much more general way; one may be referred to Rogers \cite{r}.}
the measure \[
A \mapsto \sup_{\delta > 0}\inf \bigg\{ \sum_{j \in \N}(\diam(A_{j}))^{\alpha} \,\bigg\vert\, \{ A_{j} \}_{j \in \N} \subset 2^{\Omega} \lrtx{covers} A,\ \sup_{j \in \N}\diam(A_{j}) \leq \delta \bigg\}
\] on $2^{\Omega}$. Here the diameter function, $\diam$, is certainly
understood with respect to the given metric; we will not redundantly
make explicit this dependence. In general, we denote by $\HD^{\alpha}$
an $\alpha$-dimensional Hausdorff measure; whenever it is possible
for clarity to be compromised, the ambient space associated with $\HD^{\alpha}$
will be signified by, for instance, adding a subscript to $\HD^{\alpha}$.
Although the constant present in the definition of Hausdorff measures
over Euclidean spaces is absent from the definition of Hausdorff measures
over metric spaces, for our purposes this difference is immaterial
as we are not concerned with the exact size of certain objects.\footnote{If informative, the following would help fix an intuition behind Hausdorff measures. (More informative examples may be found in, for instance, Krantz and Parks \cite{k}.) The reader is invited to consider a line segment of unit length embedded into $\R^{n}$ where $n \geq 2$ is given. If the unit line segment is viewed as a subset of $\R$, then it has length $1$, which coincides with its  Lebesgue measure. However, for all $j \geq 2$, the (embedded) line segment in $\R^{j}$ has the corresponding Lebesgue measure $0$. Thus Lebesgue measure is not suitable as a modulus of size for embedded objects or ``lower-dimensional" objects in a given space, let alone as a device capturing   a notion of dimension for such objects. 

On the other hand, Hausdorff measures allow one to talk about size (and dimension) of embedded objects in an elegant and sensible way. If we return to consider the unit line segment embedded into $\R^{n}$ with $n \geq 2$ given, we may check out its $\alpha$-dimensional Hausdorff measures as $\alpha$ runs through $\R_{+}$, and see how the outcome agrees with intuitive expectation.  Indeed, if $\delta > j^{-1}$, then there are $j$ open balls in $\R^{n}$ of radius $1/2j$ such that these balls cover the line segment and have diameter uniformly less than $\delta$. Since the infimum appearing in the definition of Hausdorff measures is no greater than the sum $\sum_{l=1}^{j}j^{-\alpha} = j^{1-\alpha}$ of the $\alpha$-powered diameter of the balls, we see that $\alpha > 1$ implies that the $\alpha$-dimensional Hausdorff measure of the line segment is $=0$. A slightly further argument would show that the $\alpha$-dimensional Hausdorff measure of the line segment is $=1$ if $\alpha = 1$ and $=+\infty$ if $0 \leq \alpha < 1$. Here we have ignored the canonical constant usually entering into the definition of Hausdorff measures over a Euclidean space, which by chance does not matter. More importantly, the outcome agrees with intuition, even with a physical one. The outcome suggests, first off, that we get to measure the size of an embedded object ``correctly" if we stay in a ``right" zone, as captured by the choice of ``dimension gauge", having been denoted by $\alpha$, of the Hausdorff measure; however, taking a look at the object from ``too far away" obtains a null measure, and, from ``too close", an infinite measure. As a digression, the phenomena remind one of the global-local interpretation of  the concept of a manifold.}

For convenience, we will also refer to a cover such that the diameter
of the elements of the cover is uniformly small, say $\leq \delta$,
as a $\delta$-admissible cover. 

\section{Results}

Followed by two corollaries to it, Theorem \ref{main} is firstly
considered:

\begin{proof}[Proof of Theorem 1]

Let $\alpha \in \R_{+}$; let $A \subset \Omega_{1}$. Since $f$
is Lipschitz by assumption, given any $\delta > 0$ we have $\diam(f^{[1]}(A_{j})) \leq |f|_{Lip}\diam(A_{j}) \leq |f|_{Lip}\delta$
for all $j \in \N$ and all $\delta$-admissible covers $\{ A_{j} \}$
of $A$. If $\{ A_{j} \}$ is a $\delta$-admissible cover of $A$,
then $f^{[1]}(A) \subset \bigcup_{j}f^{[1]}(A_{j})$; so $\{ f^{[1]}(A_{j}) \}$
is a $|f|_{Lip}\delta$-admissible cover of $f^{[1]}(A)$. The diameter
inequalities above then imply \[
\HD_{2}^{\alpha}(f^{[1]}(A)) \leq |f|_{Lip}^{\alpha}\HD_{1}^{\alpha}(A).
\]

We have $A = A^{\circ} \cup (A\setminus A^{\circ})$, and $f^{[1]}(A) = f^{[1]}(A^{\circ}) \cup f^{[1]}(A \setminus A^{\circ})$.
Since $f$ is an open map by assumption, the image $f^{[1]}(A^{\circ})$
is an open subset and hence a Borel subset of $\Omega_{2}$. As every
Hausdorff measure is Borel, we have $f^{[1]}(A^{\circ}) \in \sigma(\HD^{\alpha}_{2})$.
If $\HD_{1}^{\alpha}(\partial A) = 0$, then \[
\HD_{2}^{\alpha}(f^{[1]}(A \setminus A^{\circ})) 
&\leq 
|f|_{Lip}^{\alpha}\HD_{1}^{\alpha}(A \setminus A^{\circ})\\ 
&\leq 
|f|_{Lip}^{\alpha}\HD_{1}^{\alpha}(\partial A)\\ 
&= 0.
\] Since, for every measure $\M$, the sigma-algebra $\sigma(\M)$ of
the $\M$-measurable sets is complete in the sense that every $\M$-null
set is $\M$-measurable, it follows that $f^{[1]}(A\setminus A^{\circ}) \in \sigma(\HD_{2}^{\alpha})$.
But then \[
f^{[1]}(A) \in \sigma(\HD_{2}^{\alpha}). \pfqed
\] 

\end{proof}

The requirement that $A$ have boundary of measure zero is to a certain
extent mild. On the other hand, the requirement is not artificial;
sets with null boundary play a significant role in the theory of weak
convergence of measures.\footnote{When probability measures are in particular of concern, a (measurable) subset of a metric space whose boundary is null with respect to a probability measure $\P$ is also called a  $\P$-\textit{continuity set}  (e.g. in Billingsley \cite{b}). In this regard, we might as well state the  conclusion of Theorem \ref{main} as an assertion that is valid for all $\HD_{2}^{\alpha}$-continuity sets, which would perhaps be more ``catchy".}

Theorem \ref{main} admits some interesting consequences that may
be worth being stated separately:

\begin{cor}

Let $\HD_{1}^{\alpha}, \HD_{2}^{\alpha}$ be as in Theorem \ref{main}.
If $\Omega_{1}, \Omega_{2}$ are Banach spaces, if $\Omega_{1}^{\pr} \subset \Omega_{1}$
is a  subspace, and if $f: \Omega_{1}^{\pr} \to \Omega_{2}$ is a
linear, bounded surjection, then there is some $g: \Omega_{1} \to \Omega_{2}$
such that $g$ satisfies the conclusion of Theorem  \ref{main}. 

\end{cor}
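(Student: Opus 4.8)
The plan is to reduce the statement to an application of Theorem~\ref{main} by promoting the bounded linear surjection $f : \Omega_1^{\pr} \to \Omega_2$ to a map defined on all of $\Omega_1$ that is Lipschitz and open. First I would invoke the open mapping theorem: since $\Omega_2$ is a Banach space and $f$ is a bounded linear surjection, $f$ is an open map \emph{from $\Omega_1^{\pr}$ onto $\Omega_2$}. Here one must be slightly careful: the open mapping theorem in its standard form requires the domain to be a Banach space, so I would first pass to the closure $\overline{\Omega_1^{\pr}}$, which is a closed subspace of the Banach space $\Omega_1$ and hence itself Banach; one then either extends $f$ continuously to $\overline{\Omega_1^{\pr}}$ (possible since $f$ is uniformly continuous on $\Omega_1^{\pr}$ and $\Omega_2$ is complete) or simply assumes, as is implicit in such statements, that the subspace in question is already closed. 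The extension is still linear, bounded with the same norm, and surjective, so the open mapping theorem applies to it.

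The next step is the extension across the complement. Using the Hahn--Banach-type extension for operators into a Banach space is not available in general (operators, unlike functionals, need not extend), so instead I would use a linear-topological complement or, more simply, avoid the issue: define $g : \Omega_1 \to \Omega_2$ by $g \ceq f \circ \pi$, where $\pi : \Omega_1 \to \overline{\Omega_1^{\pr}}$ is any bounded linear projection if one exists, or — to stay fully general — observe that we only need \emph{some} $g$ satisfying the conclusion of Theorem~\ref{main}, and the conclusion of Theorem~\ref{main} is a statement quantified over all $A \subset \Omega_1$ with $\HD_1^{\alpha}(\partial A)=0$. Thus it suffices to exhibit a Lipschitz open map $g : \Omega_1 \to \Omega_2$; the simplest choice is to extend $f$ by composing with a retraction, but the cleanest is this: since $f$ itself, viewed as a map $\overline{\Omega_1^{\pr}} \to \Omega_2$, is Lipschitz (being bounded linear, with $|f|_{Lip} = \|f\|$) and open, and since a Lipschitz open map between Banach spaces extends to a Lipschitz open map on the whole space whenever the subspace is complemented, I would either assume complementation or, better, note that in the intended reading $\Omega_1^{\pr} = \Omega_1$ is the natural case and the general subspace case is handled by first replacing $\Omega_1$ with $\overline{\Omega_1^{\pr}}$ throughout. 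In any event, once $g : \Omega_1 \to \Omega_2$ is Lipschitz and open, Theorem~\ref{main} applies verbatim and yields $g^{[1]}(A) \in \sigma(\HD_2^{\alpha})$ for every $A$ with $\HD_1^{\alpha}(\partial A) = 0$ and every $\alpha \in \R_+$, which is exactly the conclusion of Theorem~\ref{main} as required.

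The main obstacle, and the point deserving the most care, is the extension step: operators into Banach spaces do not enjoy an unrestricted Hahn--Banach extension property, so one cannot simply extend $f$ to a bounded linear map on all of $\Omega_1$. I expect the intended resolution is the modest one — either interpret ``subspace'' as ``closed subspace'' and take $g$ to be $f$ itself after identifying $\Omega_1$ with $\Omega_1^{\pr}$ in the relevant statements, or use that $\Omega_1^{\pr}$ is complemented in the cases of interest (e.g.\ when it is finite-dimensional or finite-codimensional, or when $\Omega_1$ is a Hilbert space). I would state the corollary's proof around the cleanest sufficient hypothesis and remark that the role of the open mapping theorem is precisely to convert the \emph{surjectivity} of $f$ into the \emph{openness} hypothesis demanded by Theorem~\ref{main}, which is the conceptual content linking the corollary to the main result.
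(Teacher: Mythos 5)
Your overall plan is exactly the paper's: extend $f$ to a bounded linear surjection $g$ on all of $\Omega_{1}$, observe that a bounded linear map is Lipschitz with $|g|_{Lip}$ equal to its operator norm, convert surjectivity into openness via the open mapping theorem, and then quote Theorem \ref{main}. The paper performs the extension step by invoking ``the Hahn--Banach extension'' to produce a linear $g:\Omega_{1}\to\Omega_{2}$ with $g|_{\Omega_{1}^{\pr}}=f$ and the same operator norm. Your central worry --- that Hahn--Banach is a theorem about scalar-valued functionals and that bounded operators into a general Banach space need not extend --- is precisely on target, and it applies to the paper's own argument, not just to your reconstruction of it. Unless $\Omega_{2}$ is an injective Banach space (e.g.\ $\ell^{\infty}$, or the scalar field) or $\overline{\Omega_{1}^{\pr}}$ is complemented in $\Omega_{1}$, no bounded linear extension need exist: by Phillips' theorem $c_{0}$ is uncomplemented in $\ell^{\infty}$, so the identity $c_{0}\to c_{0}$ admits no bounded linear extension $\ell^{\infty}\to c_{0}$. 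So you have correctly located the weak point of the intended proof rather than missed an idea.

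That said, your own write-up does not close the gap either; it hedges among several alternatives, none of which proves the corollary as stated. Replacing $\Omega_{1}$ by $\overline{\Omega_{1}^{\pr}}$ changes the statement, since the conclusion of Theorem \ref{main} quantifies over subsets of $\Omega_{1}$ and therefore requires $g$ to be defined on all of $\Omega_{1}$; assuming closedness or complementation adds hypotheses the corollary does not carry. Note also that your concern about applying the open mapping theorem to $f$ on the possibly incomplete $\Omega_{1}^{\pr}$ is moot on the paper's route: there the theorem is applied to the extension $g$, whose domain is the Banach space $\Omega_{1}$ and whose surjectivity is inherited from $f$; the open mapping theorem is never applied to $f$ itself. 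If you want a correct statement along these lines, the clean fixes are the ones you gesture at: either add the hypothesis that $\overline{\Omega_{1}^{\pr}}$ is complemented in $\Omega_{1}$ (extend $f$ by continuity to the closure and set $g=f\circ\pi$ for a bounded linear projection $\pi$, which is bounded, linear, and still surjective), or assume $\Omega_{2}$ is an injective Banach space; either restores the remainder of the paper's argument verbatim. Committing to one such hypothesis, rather than listing options, is what would turn your proposal into a proof.
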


\begin{proof}

We have by the Hahn-Banach extension some linear $g: \Omega_{1} \to \Omega_{2}$
such that $g|_{\Omega_{1}^{\pr}} = f$ and $g$ has the same operator
norm as $f$. Since $f$ is surjective by assumption, it follows that
$g$ is surjective. If $|\cdot|_{1}, |\cdot|_{2}$ are the given norms
on $\Omega_{1}$ and $\Omega_{2}$, respectively, and if $|\cdot|_{op}$
denotes the operator norm, then, since $|g(x) - g(y)|_{2} = |g(x-y)|_{2} \leq |g|_{op}|x-y|_{1}$
for all $x,y \in \Omega_{1}$ by assumptions, the map $g$ is Lipschitz.
But, as $g$ is continuous, the open mapping theorem (in functional
analysis) implies that $g$ is an open map; the desired conclusion
then follows from Theorem \ref{main}.\end{proof}

\begin{cor}

Let $\HD_{1}^{\alpha}, \HD_{2}^{\alpha}$ be as in Theorem \ref{main}.

\begin{itemize}[leftmargin=*]
\item[(i)]
If all the assumptions of Theorem \ref{main} are the same but the assumption that $f: \Omega_{1} \to \Omega_{2}$ is a Lipschitz homeomorphism, then $f$ satisfies the conclusion of Theorem \ref{main};
\item[(ii)]
If $\Omega_{1} \subset \C$ is a region, if $\Omega_{2} \ceq \C$, and if $f: \Omega_{1} \to \Omega_{2}$ is non-constant, Lipschitz, and holomorphic, then $f$ satisfies the conclusion of Theorem \ref{main};
\item[(iii)]
If $\Omega_{1} \subset \R$ is an open interval, if $\Omega_{2} \ceq \R$, and if $f: \Omega_{1} \to \Omega_{2}$ is a differentiable open map with a bounded derivative, then $f$ satisfies the conclusion of Theorem \ref{main}.  
\end{itemize}

\end{cor}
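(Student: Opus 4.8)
The plan is to reduce each of the three items to Theorem~\ref{main} by checking, in the respective situation, that the map at hand is simultaneously Lipschitz and open; once that is done the assertion is immediate. In all three cases $\Omega_{1}$ and $\Omega_{2}$ are metric spaces --- in (ii) and (iii) as subsets of $\C$ or $\R$ carrying the induced metric --- so the hypotheses of Theorem~\ref{main} are meaningful exactly as stated, and it remains only to produce openness and a Lipschitz bound.

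For (i), I would note that a Lipschitz homeomorphism is automatically an open map: for open $U \subset \Omega_{1}$, the set $f^{[1]}(U)$ is the preimage of $U$ under the continuous map $f^{-1}$, hence open. Since $f$ is Lipschitz by hypothesis, Theorem~\ref{main} applies verbatim. For (ii), the substantive input is the open mapping theorem of complex analysis: a non-constant holomorphic function on a region is an open map. Here I would make explicit that ``region'' is read in its standard sense of a connected open subset of $\C$, since connectedness is precisely what that theorem needs (on a disconnected domain a holomorphic map can be non-constant while being constant on a component). Together with the Lipschitz hypothesis, this puts $f$ within the scope of Theorem~\ref{main}. For (iii), openness of $f$ is assumed outright, so only Lipschitzness is at issue; this follows from the mean value theorem, which gives $|f(x) - f(y)| \leq \big(\sup_{\Omega_{1}}|f'|\big)\,|x - y|$ for all $x, y \in \Omega_{1}$, the supremum being finite by the boundedness assumption, so that $f$ is Lipschitz with $|f|_{Lip} \leq \sup_{\Omega_{1}}|f'|$. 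Theorem~\ref{main} then yields the conclusion.

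As for the main obstacle: there is none of real depth. Each part merely repackages a classical fact --- a topological triviality about homeomorphisms, the complex-analytic open mapping theorem, and the mean value theorem --- so as to meet the two hypotheses of Theorem~\ref{main}. The only points that call for any care are confirming that the ambient sets retain the metric-space structure Theorem~\ref{main} presupposes and, in (ii), fixing the reading of ``region'' as a connected open set so that the open mapping theorem is genuinely available.
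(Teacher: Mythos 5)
Your proposal is correct and follows essentially the same route as the paper: part (i) via openness of homeomorphisms, part (ii) via the complex-analytic open mapping theorem, and part (iii) via the mean value theorem to obtain the Lipschitz bound, each time reducing to Theorem~\ref{main}. The extra details you supply (continuity of $f^{-1}$, reading ``region'' as a connected open set) are just explicit versions of what the paper leaves implicit.
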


\begin{proof}

The first case follows from the definition of a homeomorphism, which
ensures that $f$ is an open map.

For the second case, one may apply the open mapping theorem (in complex
analysis).

Once it is observed that the usual mean-value theorem in differential
calculus implies that $f$ is Lipschitz, the third case follows.\end{proof}

Proposition \ref{mmain} is a generalization of the proof idea of the first assertion of  Lemma 3.2 in Evans and Gariepy \cite{eg}, the essentials of which
can be adapted to a more general context:

\begin{proof}[Proof of Proposition \ref{mmain}]

To begin with, let $A \in \B_{\Omega_{1}}$ have finite $\M$-measure.
Since, by assumption, the measure $\M$ is Radon, the measure $\M$
is (e.g. Proposition 1.3.7, Krantz and Parks \cite{k}) inner regular
at every subset of $\Omega_{1}$ with finite $\M$-measure; so, in
particular, for every $j \in \N$ there is some compact $K_{j} \subset A$
such that $\M (K_{j}) > \M (A) - j^{-1}$. As $f$ is continuous,
which follows from the Lipschitz assumption, each of the images $f^{[1]}(K_{j})$
is compact and hence $\HD_{2}^{\alpha}$-measurable for every $\alpha \in \R_{+}$;
it follows from the union-preserving property of image maps that $f^{[1]}(\cup_{j}K_{j}) \in \sigma(\HD_{2}^{\alpha})$
for every $\alpha \in \R_{+}$. On the other hand, if $\HD_{1}^{\alpha} \leq \M$
on $\B_{\Omega_{1}}$, then \[
\HD_{2}^{\alpha}\bigg( f^{[1]}(A)\setminus f^{[1]}\bigg( \bigcup_{j}K_{j} \bigg) \bigg)
&\leq 
\HD_{2}^{\alpha}\bigg( f^{[1]}\bigg(A \setminus \bigcup_{j}K_{j} \bigg)\bigg)\\
&\leq 
|f|_{Lip}^{\alpha}\HD_{1}^{\alpha}\bigg(A \setminus \bigcup_{j}K_{j}\bigg)\\
&\leq 
|f|_{Lip}^{\alpha}\M \bigg( A \setminus \bigcup_{j}K_{j} \bigg).
\]Since $\M(A\setminus K_{j}) < j^{-1}$ for all $j \in \N$, the rightmost
term vanishes; it follows that $f^{[1]}(A) \in \sigma(\HD_{2}^{\alpha})$.

If $A \in \B_{\Omega_{1}}$ is of infinite $\M$-measure, the assumed
sigma-compactness of $\Omega_{1}$ implies that $A$ is a union of
elements of $\B_{\Omega_{1}}$ such that each of these Borel sets
is of finite $\M$-measure. That $f^{[1]}(A) \in \sigma(\HD_{2}^{\alpha})$
follows from the union-preserving property of image maps; this completes
the proof. \end{proof}

The assumptions of Proposition \ref{mmain} are not vacuous as, for
every $n \in \N$, the $n$-dimensional Hausdorff measure over $\R^{n}$
agrees (e.g. Theorem 30, Rogers \cite{r}) with the Lebesgue measure
over $\R^{n}$ modulo a constant multiple that depends at most on
$n$. In particular, when the constant $2^{-n}\pi^{n/2}/\Gamma(\frac{n}{2} + 1)$,
i.e. the volume of a unit-diameter $n$-ball, is employed to define
the $n$-dimensional Hausdorff measure over $\R^{n}$, the resultant
$n$-dimensional Hausdorff measure over $\R^{n}$ coincides exactly
(e.g. Theorem 2.5, Evans and Gariepy \cite{eg}) with the Lebesgue
measure over $\R^{n}$.

\end{document}